\keywords{} 
\subjclass[2010]{}
\theoremstyle{plain}
\newtheorem{thm}{Theorem}[section]
\newtheorem{prop}[thm]{Proposition}
\newtheorem{cor}[thm]{Corollary}
\newtheorem{rem}[thm]{Remark}
\newtheorem{lem}[thm]{Lemma}
\theoremstyle{definition}
\newcommand{\sL}{\mathcal{L}}
\newcommand{\mG}{\mathbb{G}}
\newcommand{\mP}{\mathbb{P}}
\def\leq{\leqslant}
\numberwithin{equation}{section}
\newcommand{\beba}  {\begin{equation}\begin{array}{rcl}}
\newcommand{\eaee}  {\end{array}\end{equation}}
\def\l@section{\@tocline{1}{0pt}{1pc}{}{}}
\def\l@subsection{\@tocline{2}{0pt}{1pc}{4.6em}{}}
\def\l@subsection{\@tocline{3}{0pt}{1pc}{7.6em}{}}
\renewcommand{\tocsection}[3]{%
  \indentlabel{\@ifnotempty{#2}{\makebox[2.3em][l]{%
    \ignorespaces#1 #2.\hfill}}}#3}
\renewcommand{\tocsubsection}[3]{%
  \indentlabel{\@ifnotempty{#2}{\hspace*{2.3em}\makebox[2.3em][l]{%
    \ignorespaces#1 #2.\hfill}}}#3}
\renewcommand{\tocsubsection}[3]{%
  \indentlabel{\@ifnotempty{#2}{\hspace*{4.6em}\makebox[3em][l]{%
    \ignorespaces#1 #2.\hfill}}}#3}
\title[On the degree of a modular map]{On the degree of a modular map}
\author{Ciro Ciliberto}
\address{
Ciro Ciliberto\\Dipartimento di Matematica\\
Universit\`a di Roma ``Tor Vergata''\\
Via della Ricerca  Scientifica, 00177 Roma\\ Italia
\texttt{cilibert@axp.mat.uniroma2.it   }}
\author{Sandro Verra}
\address{Sandro Verra\\Departimento di Matematica e  Fisica \\
Universit\`a Roma 3\\
Largo San Leonardo Murialdo,  00146 Roma \\ Italia
\texttt{sandro.verra@gmail.com}}
\begin{document}

\markboth{}{}
	\label{cast2}
\maketitle

\begin{abstract} {Let $X$ be a general cubic hypersurface in $\mathbb P^4$. If $x\in X$ is a general point there are exactly six distinct lines in $X$ passing through $x$, that lie on the rank 3 quadric cone with vertex $x$ of lines that have intersection multiplicity at least 3 with $X$ in $x$. So there is a natural rational map $X\dasharrow \mathcal M_2$. In this paper we compute its degree to be 2074320. } \end{abstract}

\tableofcontents

\section{Introduction} Let $X$ be a general cubic hypersurface in the complex projective space $\mathbb P^4$. If $x\in X$ is a general point there are exactly six distinct lines in $X$ passing through $x$, that lie on the rank 3 quadric cone with vertex $x$ of lines that have intersection multiplicity at least 3 with $X$ in $x$. Hence to $x\in X$ one can associate a set of six distinct points lying on an irreducible conic, and ultimately the genus 2 curve that is the double cover of the conic branched at those six points.  So there is a natural rational map $\phi: X\dasharrow \mathcal M_2$ that turns out to be dominant. 
The aim of this paper is to compute the degree of the map $\phi$. This is given in Theorem \ref {thm:main} below, and it turns out to be 2074320.

The idea of the proof is as follows. We consider (see \S \ref {ssec: deg}) a general degeneration of a general cubic hypersurface $X$ in $\mathbb P^4$ to a very special cubic hypersurface $X_0$, namely a \emph{Segre cubic}, i.e., a cubic hypersurface with 10 ordinary double points and no other singularity (of which we recall the properties necessary for us in Section \ref {sec:segprim}) . At a first glance, the map $\phi$ seems to degenerate to the analogous map $\phi_0$ for $X_0$, so that one would be led to assert that the degree of $\phi$ equals the degree of $\phi_0$,  that turns out to be $6!$ as we recall in Section \ref {sec:phi}. However this is not the case. Indeed a closer analysis shows that there are further contributions to the degree of $\phi$ coming from the 10 nodes of $X_0$ (see \S \ref {ssec:var}). These contributions are computed in \S \ref {ssec:var} with the help of the computation of the degree of another modular map considered in Section \ref {sec:anot}.

\medskip
\noindent {\bf Aknowledgement}:  The authors are members of GNSAGA of the Istituto Nazionale di Alta Matematica ``F. Severi''. \color{black}
\medskip

\section{The Segre primal} \label{sec:segprim}

The \emph{Segre primal} is a  cubic hypersurface in $\mP^4$, introduced by  C. Segre in \cite {Seg} and very much studied in the literature. For a recent reference, see Dolgachev's book \cite {Dol}. In this section we recall some basic facts about the Segre primal, that will be useful in the sequel. 

\subsection{Definition}  Consider the 5--dimensional projective space $\mP^5$ with homogeneous coordinates $[x_0,\ldots, x_5]$. We will identify the 4--dimensional projective space $\mP^4$ with the hyperplane of $\mP^5$ with equation
$$
\sum_{i=0}^5 x_i=0.
$$ 
The \emph{Segre primal} $X_0$ is the cubic hypersurface of $\mP^4$ with equations
$$
\sum_{i=0}^5 x^3_i=0, \quad \sum_{i=0}^5 x_i=0. 
$$

The symmetric group $\mathfrak S_6$ acts via projectivities on $\mP^5$ by permuting the homogeneous coordinates. This action fixes the Segre primal, which is therefore acted on by $\mathfrak S_6$. One has ${\rm Aut}(X_0)=\mathfrak S_6$ (see \cite {CTZ}).

\subsection{Double points and planes} The singular locus of the Segre primal consists of exactly 10 ordinary double points, precisely the point $[1,1,1,-1,-1,-1]$ and the points obtained from this by permuting the coordinates (see \cite [p. 472] {Dol}).  The group $\mathfrak S_6$ acts transitively on the set of double points of the Segre primal. 

It is well known that a cubic hypersurface in $\mP^4$ whose singular locus consists of exactly 10 ordinary double points is projectively equivalent to the Segre primal (see \cite [Thm. 9.4.14]{Dol}). 

The Segre primal contains also 15 planes, that have the equations
$$
x_i+x_j=x_k+x_l=x_m+x_n=0
$$
with $\{i,j,k,l,m,n\}=\{0,1,2,3,4,5\}$ (see \cite [p. 472] {Dol}).  Again, $\mathfrak S_6$ acts transitively on the set of planes of the Segre primal. 

The configuration of double point and planes of the Segre primal is interesting. It is well known that it is a configuration of type $(6,4)$, namely, every plane contains 4 double points and through every double point pass exactly 6 planes (see, e.g., \cite [Prop. 2.2] {Dol2}).

\subsection{Rationality}\label{ssec:rat}  The Segre primal is rational. A birational map  $\sigma: X_0\dasharrow \mP^3$ is given by projecting from a double point. The inverse map $\sigma^{-1}: \mP^3\dasharrow X_0$, is given by a 4--dimensional linear system of cubics. However we will not dwell on this here, because we are interested in a simpler birational map $\mP^3\dasharrow X_0$ defined by a 4--dimensional  linear system of quadrics. 

Let $p_1,\ldots,p_5$ be independent points in $\mP^3$, and consider the 4--dimensional linear system $\sL$ of quadrics containing $p_1,\ldots, p_5$. The image of the rational map $\varphi_\sL: \mP^3\dasharrow \mP^4$ determined by $\sL$, whose indeterminacy locus is  $\{p_1,\ldots,p_5\}$, is exactly a Segre  primal   (see \cite [Prop. 9.4.15]{Dol}). 

The geometry of the Segre primal can be read off by the map $\varphi_\sL$. For example:\\
\begin{inparaenum}
\item [$\bullet$] the 10 lines joining pairs of the points $p_1,\ldots, p_5$ are contracted by $\varphi_\sL$ to the 10 double points of the Segre primal;\\
\item [$\bullet$] the 15 planes of the Segre primal are the images of the points $p_1,\ldots, p_5$, that are blown up by $\varphi_\sL$, and of the 10 planes generated by all triples of points among $p_1,\ldots, p_5$.
\end{inparaenum}

We notice that the only subvarieties of $\mathbb P^3$ contracted to points via $\varphi_\sL$, are the 10 lines joining pairs of points among $p_1,\ldots, p_5$. So on the open subset complement of these 10 lines, the map $\varphi_\sL$ determines an isomorphism.

\subsection {Lines} \label{ssec:lines} For this section, see \cite [Ex. 10.15]{Dol}.

As a general cubic threefold, also the Segre primal $X_0$ has a 2--dimensional \emph{Fano variety} $F(X_0)$ of lines, that, considered as a surface in the Grassmannian $\mG(1,4)$ of lines in $\mathbb P^4$  has degree 45, as it happens for a general cubic threefold. First of all $F(X_0)$ contains the 15 planes. Now, given a general point $p\in X_0$, we expect that there are exactly 6 lines in $X_0$ passing through $p$. Indeed, if we introduce affine coordinates in $\mP^4$ with the origin at $p$, we see that $X_0$ is given by an affine equation of the form
\begin{equation}\label{eq:eq}
F_1(x_1,\ldots, x_4)+F_2(x_1,\ldots, x_4)+F_3(x_1,\ldots, x_4)=0
\end{equation}
where $F_i(x_1,\ldots, x_4)$ is a homogeneous polynomial of degree $i$ in $x_1,\ldots, x_4$, with $1\leq i\leq 3$. Then the set of lines on $X_0$ containing $p$ is defined by 
\begin{equation}\label{eq:eq2}
F_1(x_1,\ldots, x_4)=F_2(x_1,\ldots, x_4)=F_3(x_1,\ldots, x_4)=0,
\end{equation}
equations which are expected to define six distinct lines through $p$. To see that this is exactly the case, we consider again the birational map  $\varphi_\sL: \mP^3\dasharrow X_0$. Let $x$ be a general point of $\mP^3$, that, via the map $\varphi_\sL$ is sent to a general point $p$ of $X_0$. The 6 lines on $X_0$ passing through $p$ are the image via $\varphi_\sL$ of the 5 lines joining $x$ to $p_1,\ldots, p_5$ and of the unique rational normal cubic curve containing $x$ and $p_1,\ldots, p_5$. So $F(X_0)$ consists also of the images via $\varphi_\sL$ of:\\
\begin{inparaenum}
\item [(i)] the 5 stars of lines passing through each of the points $p_1,\ldots, p_5$;\\
\item [(ii)] the set of rational normal cubic curves  passing through the points $p_1,\ldots, p_5$.
\end{inparaenum}

So, besides the 15 planes, $F(X_0)$ contains  6 more surfaces, i.e., the images of the sets in (i) and (ii) above.  It is well known that they are all Del Pezzo surfaces of degree 5 and the group $\mathfrak S_6$ acts transitively on the set of these six surfaces, with stabilizer subgroup isomorphic to the group of automorphisms of the Del Pezzo surfaces (see \cite [\S 4]{Dol2}).

It is useful to record here, for further purposes, a few results about lines in $X_0$. 

\begin{lem}\label{lem:fin} Given any point $p\in X_0$ that does not lie on a plane on $X_0$, there are exactly six distinct lines in $X_0$ through $p$, that lie on the rank 3 quadric cone with vertex at $p$ of the lines having with $X_0$ at $p$ intersection multiplicity at least 3. 
\end{lem}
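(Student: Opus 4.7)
The plan is to transport the problem to $\mathbb{P}^3$ via the birational map $\varphi_\sL:\mathbb{P}^3\dasharrow X_0$ of \S\ref{ssec:rat}. Since the 15 planes of $X_0$ are the images of the five exceptional divisors $E_i$ and of the strict transforms of the ten triple planes $\langle p_i,p_j,p_k\rangle$, the hypothesis that $p$ lies on no plane of $X_0$ ensures that $x:=\varphi_\sL^{-1}(p)$ is a unique point of $\mathbb{P}^3$ off the base points $p_i$, off the ten secants $\overline{p_i p_j}$, and off the ten triple planes. After blowing up the base points, the resolved map $\tilde\varphi_\sL$ is a local isomorphism at the point above $x$, so its differential identifies tangent directions at $x$ with tangent directions at $p$ inside $T_p X_0$.

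Following \S\ref{ssec:lines} I will take as my six curves through $x$ the five lines $\ell_i:=\overline{x p_i}$ and the unique rational normal cubic $C$ through $\{x,p_1,\ldots,p_5\}$. The residual degrees $\deg\sL|_{\ell_i}=2-1=1$ and $\deg\sL|_C=2\cdot 3-5=1$ imply that the six images $L_1,\ldots,L_6$ are lines of $X_0$ through $p$. For distinctness I verify that the six tangent directions at $x$ are pairwise distinct: the five $[p_i-x]$ because $x\notin\overline{p_i p_j}$; and the tangent $\tau$ of $C$ at $x$ differs from each $[p_j-x]$, for otherwise $C$ would be tangent to $\ell_j$ at $x$, producing $C\cap\ell_j$ of length $\geq 3$, against the no-trisecant property of a twisted cubic. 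By the decomposition of $F(X_0)$ in \S\ref{ssec:lines}, every line of $X_0$ through $p$ (which avoids the 15 planes) is either the $\varphi_\sL$-image of a line through some $p_i$ (hence one of the $\ell_i$) or of a twisted cubic through $p_1,\ldots,p_5$ passing through $x$ (hence $C$); so $L_1,\ldots,L_6$ exhaust the lines. Writing $X_0=\{F_1+F_2+F_3=0\}$ in affine coordinates at $p$, this is equivalently seen from B\'ezout, which bounds the length of the zero-dimensional scheme $\{F_1=F_2=F_3=0\}\subset\mathbb{P}^3=\mathbb{P}(T_p\mathbb{P}^4)$ by $1\cdot 2\cdot 3=6$, saturated by our six reduced points.

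The quadric cone of lines with intersection multiplicity $\geq 3$ at $p$ has base the conic $\mathcal{C}:=\{F_1=F_2=0\}$ in $\mathbb{P}^2:=\{F_1=0\}$, containing the six distinct directions of $L_1,\ldots,L_6$. If $\mathcal{C}$ were reducible, pigeonhole places at least three of these directions on a single line of $\mathbb{P}^2$, so three of the $L_i$ would be coplanar through $p$; pulled back through the local isomorphism at $x$, either (a) three of the directions $[p_i-x]$ are collinear, putting $x$ on a triple plane $\langle p_i,p_j,p_k\rangle$ and contradicting the hypothesis, or (b) two of them together with $\tau$ are collinear, forcing $C$ to be tangent at $x$ to the plane $\langle x,p_i,p_j\rangle$ and hence $|C\cap\langle x,p_i,p_j\rangle|\geq 2+1+1=4>\deg C=3$, absurd. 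Thus $\mathcal{C}$ is smooth and the quadric cone has rank $3$. The principal delicacy is that these distinctness and rank arguments must hold uniformly for every $p$ off the planes, not merely for generic $p$; the no-trisecant property of twisted cubics and the degree-$3$ bound on plane slices of $C$ are what deliver this uniformity.
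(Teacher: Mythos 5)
Your proof is correct, and its first half coincides with the paper's: both transport the problem to $\mathbb P^3$ via $\varphi_\sL$ and identify the six lines as the images of the five lines $\overline{xp_i}$ and of the unique twisted cubic $C$ through $x,p_1,\ldots,p_5$ (you add the degree counts $\deg\sL|_{\ell_i}=\deg\sL|_{C}=1$ and a no-trisecant argument for distinctness, which the paper leaves implicit). Where you genuinely diverge is the rank~$3$ assertion. The paper introduces the quadric cone $Z\subset\mathbb P^3$ with vertex at $x$ through $p_1,\ldots,p_5$, asserts it has rank $3$, and maps it by $\varphi_\sL$ onto the tangent hyperplane section $\mathfrak Z$ of $X_0$ at $p$; since $\mathfrak Z$ is then a cubic surface with an ordinary double point at $p$, its tangent cone --- which is exactly the cone $\{F_1=F_2=0\}$ of the statement --- has rank $3$. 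You instead argue directly on the conic $\{F_1=F_2=0\}$ in $\mathbb P(\{F_1=0\})\cong\mathbb P^2$: it contains the six distinct asymptotic directions, and if it degenerated, three of those directions would be collinear, which you exclude using the hypothesis that $x$ avoids the triple planes (for three directions of type $[p_i-x]$) and the bound $\deg(C\cap\text{plane})\leq 3$ (for two such directions together with the tangent of $C$). Your route is longer but more self-contained: the paper's argument tacitly needs the cone $Z$ to have rank exactly $3$ for \emph{every} admissible $x$, a uniformity your collinearity analysis establishes explicitly, while the paper's version is a one-line geometric observation once the ordinary double point of $\mathfrak Z$ is granted. Two small points worth dispatching explicitly: the existence and irreducibility of $C$ follow because no four of $x,p_1,\ldots,p_5$ are coplanar (this is exactly the condition that $x$ avoids the triple planes); and before applying pigeonhole you should note that $F_2|_{F_1=0}$ is not identically zero, which follows at once from the finiteness of the set of lines through $p$ that you have already established.
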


\begin{proof} Consider again the map $\varphi_\sL: \mP^3\dasharrow X_0$. Let $p'\in \mathbb P^3$ be the point such that $\phi_\mathcal L(p')=p$. By the assumption, $p'$ does not lie on any plane spanned by three of the points $p_1,\ldots, p_5$. Then the five lines joining $p'$ with each of the points $p_1,\ldots, p_5$ are distinct and mapped by $\varphi_\sL$ to five distinct lines in $X_0$ through $p$. Consider the rank 3 quadric cone $Z$ with vertex at $p'$ containing $p_1,\ldots, p_5$, that contains the aforementioned five lines. On this cone there is a unique rational normal cubic curve containing $p_1,\ldots, p_5$ and $p'$, and it is clear that it is irreducible. This rational normal cubic is mapped via $\varphi_\sL$ to a different line passing through $p$, and there is no other line in $X_0$ through $p$. 

To prove the last assertion, we notice that the cone $Z$ is mapped via $\varphi_\sL$ to the tangent hyperplane section $\mathfrak Z$ to $X_0$ at $p$, which is a cubic surface that, locally at $p$, is analytically isomorphic to $Z$ at $p'$, so it has an ordinary double point at $p$. Now the quadric cone with vertex at $p$ of the lines having with $X_0$ at $p$ intersection multiplicity at least 3 coincides with the analogous quadric cone for $\mathfrak Z$. Since $\mathfrak Z$ has an ordinary double point at $p$, the same is true for this quadric cone, proving the assertion.\end{proof} 

\begin{lem}\label{lem:pl} Let $p\in X_0$ be a point that lies on a plane $\pi$ in $X_0$ but is different from a double point. Then the set of lines in $X_0$ containing $p$ consists of the pencil of lines through $p$ in $\pi$ plus two more distinct lines not lying on $\pi$.
\end{lem}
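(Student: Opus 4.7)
The plan is to follow the same strategy as in the proof of Lemma \ref{lem:fin}: I would use the birational model $\varphi_\sL\colon \mP^3 \dasharrow X_0$ of Section \ref{ssec:rat} together with the description of $F(X_0)$ given in Section \ref{ssec:lines}. Since $\mathfrak S_6=\Aut(X_0)$ acts transitively on the fifteen planes of $X_0$, I may assume without loss of generality that $\pi=\varphi_\sL(\pi')$, where $\pi'\subset\mP^3$ is the plane spanned by $p_1,p_2,p_3$. A sufficiently general point $p\in\pi$ (smooth, not on another plane of $X_0$, and not on the image of any of the ten lines $p_ip_j$) pulls back to a unique point $p'\in\pi'$ lying in the open locus where $\varphi_\sL$ is an isomorphism.

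The main step is to catalogue the lines of $X_0$ through $p$ according to which component of $F(X_0)$ contains them. The fifteen planes contribute only $\pi$, yielding the pencil of lines through $p$ in $\pi$. For each $i\in\{1,\ldots,5\}$, the star--surface of $p_i$ contributes the single line $\varphi_\sL(p'p_i)$: for $i\in\{1,2,3\}$ the line $p'p_i$ lies in $\pi'$, so its image lies in $\pi$ and is absorbed into the pencil, whereas for $i\in\{4,5\}$ the line $p'p_i$ is not contained in $\pi'$, so $\varphi_\sL(p'p_i)$ is a line of $X_0$ through $p$ not contained in $\pi$. Finally, the twisted cubic through $p',p_1,\ldots,p_5$ necessarily degenerates, since $p_1,p_2,p_3,p'$ are coplanar; its limiting position splits as a conic in $\pi'$ through $p_1,p_2,p_3,p'$ together with the line $p_4p_5$, which is contracted by $\varphi_\sL$. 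Because $\varphi_\sL|_{\pi'}\colon \pi'\dasharrow \pi$ is the standard quadratic Cremona transformation based at $p_1,p_2,p_3$, each such conic maps to a line of $\pi$ through $p$, so the twisted--cubic component contributes nothing beyond the pencil already counted.

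To finish, I would verify the two claims concerning the additional lines $\varphi_\sL(p'p_4)$ and $\varphi_\sL(p'p_5)$. Neither lies in $\pi$, for $p_i\notin\pi'$ when $i\in\{4,5\}$ forces $p'p_i\not\subset\pi'$, and hence its image is not contained in $\varphi_\sL(\pi')=\pi$. The two lines are distinct because $\varphi_\sL$ is an isomorphism in a neighbourhood of $p'$, so the two distinct lines $p'p_4$ and $p'p_5$ have distinct images through $p$. I expect the main obstacle to be the careful control of the degeneration of the twisted cubic as the sixth point specialises to $\pi'$, and the verification that its entire contribution is absorbed into the pencil of lines through $p$ in $\pi$; this is the only step going beyond the template of Lemma \ref{lem:fin}.
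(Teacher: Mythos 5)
Your proof is correct and follows the same route as the paper's: reduce by the transitive $\mathfrak S_6$-action to the case $\pi=\varphi_\sL(\langle p_1,p_2,p_3\rangle)$, take $p=\varphi_\sL(p')$ with $p'$ in the locus where $\varphi_\sL$ is an isomorphism, and identify the two extra lines as the images of $\overline{p'p_4}$ and $\overline{p'p_5}$. You supply more detail than the paper on why the list is complete (the component-by-component catalogue of $F(X_0)$ and the degeneration of the twisted cubic into a conic of $\pi'$ plus the contracted line $\overline{p_4p_5}$), and your explicit restriction to a sufficiently general $p\in\pi$ is consistent with what the paper's own argument actually covers, since both proofs exclude the points of $\pi$ lying on its intersection with another plane of $X_0$.
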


\begin{proof} Given the transitive action of $\mathfrak S_6={\rm Aut}(X_0)$ on the planes in $X_0$, we can consider just one of these plane and prove the assertion for it. Look again the map $\varphi_\sL: \mP^3\dasharrow X_0$ and suppose that the plane $\pi$ is the image via $\varphi_\mathcal L$ of the plane $\pi'=\langle p_1,p_2, p_3\rangle$. The four double points on $\pi$ are the images via  $\varphi_\mathcal L$ of the three lines pairwise joining the points $p_1,p_2, p_3$, and of the intersection point $z=\langle p_4,p_5\rangle \cap \pi'$. So we assume that $p=\varphi_\mathcal L(p')$, with $p'\in \pi'$ off the three lines pairwise joining   $p_1,p_2, p_3$ and different from $z$.  Then the lines in $X_0$ passing through $p$  and not lying on the plane $\pi$ are the images via $\varphi_\mathcal L$ of the two distinct lines joining $p'$ with $p_4$ and $p_5$. The assertion follows. \end{proof}

\begin{rem}\label{rem:lim}{\rm Given a point $p\in X_0$ in a plane $\pi$ in $X_0$ distinct from the four double points on $\pi$, there are in general six marked lines of $X_0$  through $p$, namely the four lines joining $p$ with the four double points on $\pi$ plus the two lines passing through $p$ and not lying on $\pi$. At most two of these lines can coincide to one (with multiplicity 2), in case $p$ is collinear with two of the double points. By taking once more into account the map $\varphi_\sL$, it follows that these six lines are the limit of the lines in $X_0$ through a general point $x\in X_0$ when $x$ tends to $p$.}
\end{rem}

\begin{lem}\label{lem:lines} Let $p$ be a double point of the Segre primal $X_0$. Then the union of lines in $X_0$ passing through $p$ coincides with the union of the six planes in $X_0$ passing through $p$. These six planes are a complete intersection  of type $(2,3)$, they lie on the rank 4 tangent quadric cone  to $X_0$ at $p$, and three of them belong to one ruling of the cone, the other three to the other ruling. 
\end{lem}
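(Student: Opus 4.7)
The plan is to work in local affine coordinates centered at $p$. Since $p$ is an ordinary double point of the cubic threefold $X_0$, we may choose affine coordinates $x_1,\ldots,x_4$ on $\mP^4$ with origin at $p$ so that $X_0$ is defined by
$$F_2(x_1,\ldots,x_4)+F_3(x_1,\ldots,x_4)=0,$$
with $F_2$ a non--degenerate quadratic form and $F_3$ a cubic form. Then the projectivized tangent cone $Q:=\{F_2=0\}\subset \mP^3$ is a smooth quadric surface, so the tangent quadric cone $T\subset\mP^4$ to $X_0$ at $p$, which is the cone over $Q$ with vertex $p$, has rank 4.

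A line through $p$ with direction $a=(a_1,\ldots,a_4)$ is contained in $X_0$ if and only if $F_2(a)=F_3(a)=0$. Consequently, identifying $\mP^3$ with the projectivized tangent space at $p$, the set of directions of lines in $X_0$ through $p$ is the subscheme $V:=\{F_2=F_3=0\}\subset\mP^3$, and a plane of $X_0$ through $p$ corresponds exactly to a line of $\mP^3$ lying in $V$. By the $(6,4)$ configuration recalled in \S\ref{sec:segprim}, there are exactly six planes of $X_0$ through $p$, so $V$ contains at least six distinct lines of $\mP^3$.

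Since $Q$ is irreducible and is not contained in the cubic surface $\{F_3=0\}$, the scheme $V=Q\cap\{F_3=0\}$ is a proper complete intersection in $\mP^3$ of dimension 1 and degree $2\cdot 3=6$. Hence $V$ is exactly the union of the six lines above, each with multiplicity one, and the union of all lines of $X_0$ through $p$ coincides with the union of the six planes of $X_0$ through $p$. In $\mP^4$ these six planes are
$$X_0\cap T=\{F_2=F_3=0\}\subset\mP^4,$$
a complete intersection of type $(2,3)$ of degree 6, and in particular all six planes lie on the tangent quadric cone $T$.

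Finally, $Q\cong\mP^1\times\mP^1$ carries two rulings of lines, of bidegrees $(1,0)$ and $(0,1)$, and $\{F_3=0\}$ cuts $Q$ in a divisor of bidegree $(3,3)$; so when this divisor splits into the six lines of $V$, three belong to each ruling, and pulling back to $\mP^4$ three of the six planes belong to one ruling of $T$ and three to the other. The only step requiring care is verifying that $V$ has the expected dimension 1, i.e.\ that $Q$ is not a component of $\{F_3=0\}$; otherwise, since $Q$ is irreducible, $Q$ would lie in $\{F_3=0\}$, forcing $T\subset X_0$, which is absurd because $T$ and $X_0$ are distinct irreducible hypersurfaces of $\mP^4$.
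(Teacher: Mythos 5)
Your proof is correct and follows essentially the same route as the paper: introduce affine coordinates at $p$, observe that the locus of lines through $p$ is the complete intersection $\{F_2=F_3=0\}$ of type $(2,3)$ and degree $6$, and note that it already contains the six planes through $p$ from the $(6,4)$ configuration. You in fact supply details the paper leaves implicit (that $F_2$ has rank $4$ since $p$ is an ordinary double point, that the intersection is proper, and the bidegree $(3,3)$ count giving three planes in each ruling), so nothing is missing.
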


\begin{proof} We introduce affine coordinates in $\mP^4$ with the origin at $p$. Then $X_0$  has  equation of the form
$$
F_2(x_1,\ldots, x_4)+F_3(x_1,\ldots, x_4)=0
$$
where $F_i(x_1,\ldots, x_4)$ is a homogeneous polynomial of degree $i$ in $x_1,\ldots, x_4$, with $2\leq i\leq 3$. The set of lines on $X_0$ containing $p$ is defined by 
$$
F_2(x_1,\ldots, x_4)=F_3(x_1,\ldots, x_4)=0,
$$
that is a complete intersection surface of type $(2,3)$. On the other hand the set of lines on $X_0$ containing $p$ contains the six planes in $X_0$ passing through $p$. Then the assertion  follows.
\end{proof}

\subsection{The Castelnuovo--Richmond quartic}\label{ssec:CR}  The dual $X_0^*$ of the Segre primal $X_0$ is a quartic hypersurface, called the \emph{Castelnuovo--Richmond quartic} (see \cite [p. 478]{Dol}), or sometimes the \emph{Igusa quartic} (see \cite [p. 445]{Dol}). The duality map
$$
\tau: X_0\dasharrow X_0^*
$$  
is birational, it is not defined only at the 10 double points of $X_0$ and maps the 15 planes of $X_0$ to 15 lines of $X_0^*$ that are loci of double points for $X_0^*$ (see \cite [p. 479]{Dol}). The Castelnuovo--Richmond quartic has an important modular property (see \cite [p. 545] {Dol}) that we will now recall.

Let $q$ be a general point of the Castelnuovo--Richmond quartic $X^*_0$. The tangent hyperplane section $H_q$ of $X^*_0$ at $q$ is a quartic surface with 16 nodes, one at $q$ and the other 15 at the intersections of the tangent hyperplane with the 15 double lines of $X^*_0$. Hence $H_q$  is a \emph{Kummer surface}, i.e., it is the quotient of the Jacobian $J(C_q)$ of a curve $C_q$ of genus $2$ by the $\pm 1$ involution. By projecting $H_q$ from the double point $q$, we obtain a double plane branched along the union of six distinct lines $\ell_1,\ldots, \ell_6$ tangent to a smooth conic $\Gamma$ (see \cite [Thm. 10.3.16]{Dol}). The double cover of $\Gamma$ branched at the six contact points of $\Gamma$ with $\ell_1,\ldots, \ell_6$ is just the curve $C_q$ and to specify the above set up is equivalent to give an ordering of the six Weierstrass points of $C_q$, or, what is the same, to give a choice of a symplectic basis in the group of 2--torsion points of $J(C_q)$, that is endowed with the \emph{Weil pairing} (see \cite [p. 188]{Dol}). In conclusion we have a rational map
$$
\psi: X^*_0\dasharrow \mathcal A_2(2)
$$
where $\mathcal A_2(2)$ is the moduli space of principally polarised abelian surfaces endowed with the choice of a symplectic basis of the group of 2--torsion points, or, what is the same, with a  level 2 structure. It is a fundamental result by Igusa (see \cite {Ig}, cfr. also \cite {G}) that
$\psi$ is birational.

\section{A modular map}\label{sec:phi}

If $p$ is any point in $X_0$ that does not lie on any plane in $X_0$,  there are six distinct lines on $X_0$ passing through $p$ that lie on the rank 3 quadric cone with vertex at $p$ of the lines having with $X_0$ at $p$ intersection multiplicity at least 3. So we can associate to $p$ the datum of a smooth conic plus six distinct points on it, and therefore we can consider the genus 2 double cover of this conic branched exactly at the six points. Accordingly we have a rational map
$$
\phi_0: X_0\dasharrow  \overline {{\mathcal M}_2}
$$
where, as usual, ${\mathcal M}_2$ is the moduli space of smooth, irreducible curves of genus 2. 

By Lemmata \ref {lem:fin}, \ref {lem:pl} and \ref {lem:lines}, and by Remark \ref {rem:lim} we have the following facts:\\
\begin{inparaenum}
\item [$\bullet$] the indeterminacy locus of the map $\phi_0$ is just the union of the 10 double points of $X_0$;\\
\item [$\bullet$] a point in $X_0$ is mapped to a point in $\mathcal M_2$ if and only if it does not lie on a plane contained in $X_0$;\\
\item [$\bullet$] the planes in $X_0$ are mapped via $\phi_0$ to the boundary component $\Delta_{0}$ of $\mathcal M_2$.
\end{inparaenum}

\begin{prop}\label{prop:deg} The map $\phi_0$ is generically finite of degree $6!$.
\end{prop}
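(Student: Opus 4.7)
The plan is to factor $\phi_0$ through the moduli space $\mathcal A_2(2)$ using the birational chain already assembled in \S\ref{ssec:CR}. Consider the composition
$$
\Phi_0\colon X_0\dashrightarrow X_0^*\dashrightarrow \mathcal A_2(2)\twoheadrightarrow \mathcal A_2 \simeq \mathcal M_2,
$$
where the first arrow is the duality $\tau$, the second is Igusa's birational map $\psi$, the third is the forgetful Galois cover $\pi$, and the last is (the inverse of) the birational Torelli morphism, which is available because $g=2$. Both $\tau$ and $\psi$ are birational by \S\ref{ssec:CR}, and $\pi$ is Galois with group $\mathrm{Sp}(4,\mathbb F_2)\cong\mathfrak S_6$. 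Hence $\Phi_0$ is dominant and generically finite of degree
$$
\deg\Phi_0=\deg\pi=|\mathrm{Sp}(4,\mathbb F_2)|=720=6!.
$$

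The heart of the argument is to identify $\Phi_0$ with $\phi_0$ as rational maps from $X_0$ to $\mathcal M_2$. Pick a general $p\in X_0$ and set $q:=\tau(p)\in X_0^*$. By the construction recalled in \S\ref{ssec:CR}, the image $\psi(q)\in \mathcal A_2(2)$ is the Jacobian of the genus $2$ curve obtained by projecting the Kummer section $H_q=T_q X_0^*\cap X_0^*$ from its node $q$: this exhibits $H_q$ as a double plane branched along six lines $\ell_1,\dots,\ell_6$ tangent to a smooth conic $\Gamma$, and the genus $2$ curve is the double cover of $\Gamma$ branched at the six tangency points. On the other side, by Lemma \ref{lem:fin}, $\phi_0(p)$ is the double cover of the smooth conic $C_p$ cut on the rank $3$ quadric cone at $p$, branched at the six points where the six lines of $X_0$ through $p$ meet $C_p$.

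The bridge between the two is classical polarity. Each line $\ell\subset X_0$ through $p$ produces the Gauss line $L_\ell=\{T_{p'}X_0:p'\in\ell\}$ on $X_0^*$ through $q$; by Lemma \ref{lem:fin} there are exactly six such lines $L_1,\dots,L_6$, all contained in the tangent hyperplane $T_qX_0^*$. Projection from $q$ therefore sends them bijectively to the six tangent lines $\ell_1,\dots,\ell_6$ of $\Gamma$ appearing in the double plane model of $H_q$. A direct polarity computation then identifies, under this bijection, the six intersection points of the lines through $p$ with $C_p$ with the six tangency points of the $\ell_i$ with $\Gamma$. Hence the two genus $2$ double covers coincide, so $\Phi_0(p)\simeq\phi_0(p)$ in $\mathcal M_2$, and the degree computation above yields $\deg\phi_0=6!$.

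The only real obstacle is this last polarity matching: everything else is assembling ingredients already in place (birationality of $\tau$ and $\psi$, Igusa's theorem, Torelli for $g=2$, and the exceptional isomorphism $\mathrm{Sp}(4,\mathbb F_2)\cong\mathfrak S_6$). It would be possible to replace this geometric matching by a pure moduli-theoretic check, observing that both $\Phi_0$ and $\phi_0$ associate to a general $p$ a genus $2$ curve built from six points on a conic and that over a general point of $\mathcal M_2$ the fiber of $\Phi_0$ has cardinality $6!$, corresponding to the $6!$ orderings of the Weierstrass points---i.e., precisely to the fibers of $\pi$.
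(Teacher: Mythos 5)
Your argument is essentially the paper's own proof: the paper likewise composes the birational maps $\tau$ and $\psi$ into a birational $\psi_0\colon X_0\dasharrow \mathcal A_2(2)$ and observes that $\phi_0\circ\psi_0^{-1}$ is the degree-$6!$ forgetful map to $\mathcal M_2$, leaving the identification of the two modular constructions implicit (``clearly of degree $6!$''), which is exactly the step you spell out. One small slip in your added detail: the Gauss image $L_\ell$ of a line $\ell\subset X_0$ through $p$ is a conic, not a line (the partials of a cubic are quadrics), but since $L_\ell$ passes through $q$ its projection from $q$ is still a line in the plane, so your matching with the six branch lines survives.
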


\begin{proof} We have the composition $\psi_0:X_0\dasharrow  \mathcal A_2(2)$ of the two birational maps $\tau$ and $\psi$ introduced in Section \ref {ssec:CR}, so that $\psi_0$ is also birational. Then we have the rational map $\phi_0\circ \psi_0^{-1}:\mathcal A_2(2) \dasharrow \overline {\mathcal M}_2$, that is clearly of degree $6!$. The assertion follows. \end{proof}

\begin{rem}\label{rem:dol} {\rm Proposition \ref {prop:deg} says that the general fibre of $\phi_0$ is an orbit via the $\mathfrak S_6$ action.

According to \cite [\S 2]{Dol2}, Proposition \ref {prop:deg} has been first discovered by P. Joubert \cite {Jou} and then proved by A. Coble \cite {Co}.} \end{rem}

\section{Another modular map}\label{sec:anot}

\subsection{The definition} For further purposes, we have to consider the following situation. Let $\mathbb Q\cong \mathbb P^1\times \mathbb P^1$ be a smooth quadric in $\mathbb P^3$. We will consider a curve $C$ of degree 6 and arithmetic genus 4 on $\mathbb Q$ consisting of the union of three distinct lines of a ruling plus three distinct lines of the other ruling. 

Given a general plane $\pi$ in $\mathbb P^3$, it intersects $\mathbb Q$ along a smooth conic  $\Gamma$ and $C$ at 6 distinct points lying on $\Gamma$. So, as in Section \ref {sec:phi}, to $\pi$ it remains associated a point in $\mathcal M_2$ and therefore we have a rational map
$$
\varphi: (\mathbb P^3)^\vee \dasharrow \overline {\mathcal M_{2}}.
$$
We want to prove that this map is dominant and we want to compute its degree.

Notice that we can order the six components of $C$  as $(\ell_1,\ldots, \ell_6)$, with $\ell_1, \ell_2, \ell_3$ lying in one of the two rulings of $\mathbb Q$ and $\ell_4, \ell_5, \ell_6$ lying in 
the other ruling. Then also the 6 intersection points of $\Gamma$ and $C$ are naturally ordered, so that we have also a rational map
$$
\varphi': (\mathbb P^3)^\vee \dasharrow \overline {\mathcal M_{0,6}}.
$$
It is clear that $\varphi$ is composed of $\varphi'$ and of the obvious rational map
$$
\overline {\mathcal M_{0,6}}\dasharrow \overline {\mathcal M_{2}}
$$
that has degree 6!. 

\subsection{Group action and dominance} Given the curve $C\subset \mathbb Q$, there is an obvious group $G$ of degree $72$ of projectivities of $\mathbb P^3$  fixing $ C$ (and therefore also  $\mathbb Q$). This is an extension of degree 2 of the group of degree $36=3!\cdot 3!$ of automorphisms of $\mathbb P^1\times \mathbb P^1$ that are the products of projectivities of $\mathbb P^1$ that permute three fixed points on the first factor and three fixed points on the second factor.

\begin{lem}\label{lem:gract} Let $\pi_1,\pi_2$ be two tangent planes to $\mathbb Q$ not containing any of the components of $C$. Then $\varphi'(\pi_1)=\varphi'(\pi_2)$ if and only if there is a $g\in G$ such that $g\cdot \pi_1=\pi_2$. 
\end{lem}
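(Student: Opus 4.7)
The plan is to make $\varphi'$ completely explicit on tangent planes by a cross-ratio calculation, and then match its fibres against $G$-orbits.

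First, I would fix coordinates so that $\mathbb Q\cong\mathbb P^1\times\mathbb P^1$ via its two rulings, with the three lines of $C$ in one ruling being the divisors $\{a_i\}\times\mathbb P^1$ and those in the other ruling being $\mathbb P^1\times\{b_i\}$, for $i=1,2,3$. A tangent plane to $\mathbb Q$ is of the form $\pi_t$ for a unique $t=(u,v)\in\mathbb Q$, and $\pi_t\cap\mathbb Q$ is the chain of the two rulings through $t$: $\ell'_u=\{u\}\times\mathbb P^1$ and $\ell''_v=\mathbb P^1\times\{v\}$, meeting at the node $t$. The hypothesis on $\pi_t$ gives $u\notin\{a_i\}$ and $v\notin\{b_j\}$, so the six intersection points are distinct and split $3+3$, with labels $1,2,3$ at $(a_i,v)\in\ell''_v$ and labels $4,5,6$ at $(u,b_i)\in\ell'_u$.

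Second, I would identify $\varphi'(\pi_t)$ as a point of the boundary divisor $D_{\{1,2,3\}\mid\{4,5,6\}}$ of $\overline{\mathcal M_{0,6}}$: its class is encoded by the two cross ratios of the four distinguished points (three marked plus the node) on each component, namely $(a_1,a_2,a_3;u)$ on $\ell''_v$ and $(b_1,b_2,b_3;v)$ on $\ell'_u$. This reduces the equality $\varphi'(\pi_{t_1})=\varphi'(\pi_{t_2})$ to the existence of an isomorphism of the two stable chains respecting the markings, i.e.\ to a pair $(\phi,\psi)\in\mathrm{PGL}_2\times\mathrm{PGL}_2$ with $\phi(\{a_i\})=\{a_i\}$, $\phi(u_1)=u_2$, $\psi(\{b_j\})=\{b_j\}$, $\psi(v_1)=v_2$, or to the analogous data after exchanging the roles of the two components.

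For the ``if'' direction, every element of $G$ restricts to an automorphism of $\mathbb P^1\times\mathbb P^1$ of precisely the form above (by the construction of $G$ as $(\mathfrak S_3\times\mathfrak S_3)\rtimes\mathbb Z/2$), and hence $\varphi'(g\cdot\pi_{t_1})=\varphi'(\pi_{t_1})$. For the ``only if'' direction, the pair $(\phi,\psi)$ (together with the ruling-swap when it is needed) extracted from the equality of cross-ratios assembles into a projectivity of $\mathbb P^1\times\mathbb P^1$ which extends uniquely to a projectivity of $\mathbb P^3$ preserving $\mathbb Q$ and $C$; this projectivity is the desired $g\in G$ sending $\pi_{t_1}$ to $\pi_{t_2}$.

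The step I expect to be the main obstacle is the careful tracking of how $G$ permutes the labels $1,\ldots,6$ of the marked points: a priori such a relabelling need not be realised by an intrinsic automorphism of the stable 6-pointed curve, so one has to verify that the geometric notion of isomorphism inside $\mathbb P^3$ matches the abstract moduli-theoretic one in $\overline{\mathcal M_{0,6}}$. The ruling-swap requires particular attention, since it exchanges the label-sets $\{1,2,3\}$ and $\{4,5,6\}$, and one must check that it is realised by a projectivity of $\mathbb P^3$ belonging to $G$ whose action on the chain matches the combinatorial swap of its two components.
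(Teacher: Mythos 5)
Your strategy is essentially the one the paper itself uses: its proof also splits the isomorphism of the two nodal pointed conics into its restrictions $\omega_a\colon a_1\to a_2$ and $\omega_b\colon b_1\to b_2$ to the two components and observes that the product projectivity $\omega_a\times\omega_b$ is an element of $G$ carrying $\pi_1$ to $\pi_2$; your cross-ratio bookkeeping is an explicit coordinate version of exactly that argument, and the reduction to the boundary divisor $D_{\{1,2,3\}\mid\{4,5,6\}}$ is the same identification the paper makes implicitly.

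However, the point you flag at the end as ``the main obstacle'' is not a side issue you may defer: it is precisely where the argument is incomplete, and it is also elided in the paper's own proof (which opens by assuming $\varphi(\pi_1)=\varphi(\pi_2)$ and then silently treats this as $\varphi'(\pi_1)=\varphi'(\pi_2)$). Concretely, a nontrivial $g\in G$ permutes the six components of $C$ by a nontrivial $\sigma\in(\mathfrak S_3\times\mathfrak S_3)\rtimes\mathbb Z/2$, so it carries the labelled configuration of $\pi_1$ to that of $\pi_2=g\cdot\pi_1$ \emph{with the labels permuted by} $\sigma$; in $\overline{\mathcal M_{0,6}}$, where the markings are ordered, this gives $\varphi'(\pi_1)=\sigma\cdot\varphi'(\pi_2)$ rather than $\varphi'(\pi_1)=\varphi'(\pi_2)$, and for a general tangent plane the boundary point recorded by your two cross-ratios $(a_1,a_2,a_3;u)$ and $(b_1,b_2,b_3;v)$ is fixed by no nontrivial $\sigma$. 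The same tension is visible in your own condition ``$\phi(\{a_i\})=\{a_i\}$'': setwise invariance is what equality of the \emph{unordered} configurations (i.e. equality in $\overline{\mathcal M_2}$, the hypothesis $\varphi(\pi_1)=\varphi(\pi_2)$) yields, whereas equality in $\overline{\mathcal M_{0,6}}$ forces $\phi(a_i)=a_i$ for each $i$, hence $\phi=\mathrm{id}$ and $\pi_1=\pi_2$. So the equivalence you are asked to prove is correct for $\varphi$ (or for $\varphi'$ read modulo the relabelling action of $G$ on $\overline{\mathcal M_{0,6}}$), but the ``if'' direction is not trivial for $\varphi'$ as literally defined --- it fails for every $g\neq\mathrm{id}$. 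Your instinct that the label-tracking is the crux is right; a complete write-up must either restate the lemma for the unordered map or make the relabelling explicit, since this is exactly the bookkeeping that feeds into the subsequent degree count.
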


\begin{proof} One implication is trivial, so let us prove only the other one. Suppose then that 
$\varphi(\pi_1)=\varphi(\pi_2)$. For $1\leq i\leq 2$, the intersection of $\pi_i$ with $\mathbb Q$ consists of two distinct  lines $a_i, b_i$, and the intersection of $\pi_i$ with $C$ consists of the union of three distinct points $p_{i,j}$ on $a_i$ and three distinct points $q_{i,j}$ on $b_i$, with $1\leq j\leq 3$. By the hypothesis $\varphi'(\pi_1)=\varphi'(\pi_2)$, there is a projectivity $\omega: \pi_1\longrightarrow \pi_2$, that maps 
$a_1\cup b_1$ to $a_2\cup b_2$ and maps the six points 
$p_{1,1},p_{1,2}, p_{1,3}, q_{1,1},q_{1,2}, q_{1,3}$ to the points $p_{2,1},p_{2,2}, p_{2,3}, q_{2,1},q_{2,2}, q_{2,3}$. Let us suppose that $\omega$ maps $a_1$ to $a_2$ and $b_1$ to $b_2$ (otherwise the argument is similar). Then we have unique projectivities 
$\omega_a: a_1\longrightarrow a_2$ and $\omega_b: b_1\longrightarrow b_2$ that map 
$p_{1,1},p_{1,2}, p_{1,3}$ to $p_{2,1},p_{2,2}, p_{2,3}$ and $q_{1,1},q_{1,2}, q_{1,3}$ to 
$q_{2,1},q_{2,2}, q_{2,3}$. Then $\omega_a\times \omega_b: \mathbb P^1\times \mathbb P^1\longrightarrow \mathbb P^1\times \mathbb P^1$ is an element $g$ of $G$ that  maps 
$\pi_1$ to $\pi_2$. 
\end{proof}

\begin{cor}\label{cor:fin} The map $\varphi'$ is generically finite, hence dominant. Thus the same holds for $\varphi$ and
$$
\deg(\varphi)=6! \cdot \deg(\varphi').
$$
\end{cor}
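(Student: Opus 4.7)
The plan is to combine Lemma \ref{lem:gract}—which controls the fibres of $\varphi'$ via the action of the finite group $G$—with the factorization of $\varphi$ through $\varphi'$ via the degree-$6!$ map $\overline{\mathcal M_{0,6}}\dasharrow \overline{\mathcal M_2}$ already identified in the previous subsection. The argument is essentially formal: a finite-orbit count for $\varphi'$, followed by multiplicativity of degrees under composition for $\varphi$.

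First I would establish the generic finiteness of $\varphi'$. Both $(\mathbb P^3)^\vee$ and $\overline{\mathcal M_{0,6}}$ are irreducible of dimension $3$. By Lemma \ref{lem:gract}, for a general plane $\pi$ lying in the open locus where the lemma applies, the fibre $(\varphi')^{-1}(\varphi'(\pi))$ is contained in the orbit $G\cdot\pi$, which is finite since $|G|=72$. Hence the general fibre of $\varphi'$ is finite. Because source and target are irreducible of the same dimension, the fibre-dimension theorem excludes the possibility that $\varphi'$ be non-dominant (otherwise its image would be a proper subvariety of dimension at most $2$, forcing the general fibre to have dimension at least $1$). Thus $\varphi'$ is dominant and generically finite.

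Next I would factor $\varphi = f\circ \varphi'$, where $f:\overline{\mathcal M_{0,6}}\dasharrow \overline{\mathcal M_2}$ is the $\mathfrak S_6$-Galois map of degree $6!$ that forgets the ordering of the six marked points and returns the corresponding genus-$2$ double cover. Since $\varphi'$ is dominant, so is $\varphi$, and multiplicativity of degrees under composition of dominant rational maps gives
\[
\deg(\varphi) \;=\; \deg(f)\cdot \deg(\varphi') \;=\; 6!\cdot \deg(\varphi').
\]
The only point of vigilance is to confirm that a general $\pi \in (\mathbb P^3)^\vee$ does lie in the open locus where Lemma \ref{lem:gract} applies—that is, meets $\mathbb Q$ along two distinct lines and contains no component of $C$—which is immediate, since both conditions cut out open, non-empty subsets of $(\mathbb P^3)^\vee$.
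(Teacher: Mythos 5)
The factorization step and the use of the fibre--dimension theorem are fine, but your generic-finiteness step rests on a false premise. Lemma \ref{lem:gract} applies only to \emph{tangent} planes to $\mathbb Q$, i.e., planes meeting $\mathbb Q$ in two distinct lines. Contrary to your closing remark, this is not an open, non-empty condition on $(\mathbb P^3)^\vee$: the planes tangent to the smooth quadric $\mathbb Q$ form the dual quadric $\mathbb Q^\vee$, a closed surface in $(\mathbb P^3)^\vee$, and a \emph{general} plane meets $\mathbb Q$ in a smooth irreducible conic. So the lemma says nothing directly about the fibre of $\varphi'$ through a general plane, and your sentence ``hence the general fibre of $\varphi'$ is finite'' does not follow as written.

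The gap is easily closed, and the repair is exactly the paper's argument: for a general tangent plane $\pi$ (not containing a component of $C$), the full fibre $(\varphi')^{-1}(\varphi'(\pi))$ consists only of such tangent planes, because the underlying $6$-pointed curve of $\varphi'(\pi)$ is reducible, so no plane cutting a smooth conic on $\mathbb Q$ can lie over the same point of $\overline{\mathcal M_{0,6}}$; by Lemma \ref{lem:gract} this fibre is then contained in the finite orbit $G\cdot\pi$. A single non-empty finite fibre forces, by upper semicontinuity of fibre dimension, the general fibre to be finite; equality of dimensions then gives dominance, and your factorization $\varphi=f\circ\varphi'$ with $\deg f=6!$ finishes the proof. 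In short, the architecture of your argument is right, but the justification of ``the general fibre is finite'' must route through the special codimension-one locus of tangent planes rather than through a general point of $(\mathbb P^3)^\vee$.
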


\begin{proof} By Lemma \ref {lem:gract}, there are points in $(\mathbb P^3)^\vee$ (namely the general tangent planes to $\mathbb Q$), whose fibre via $\varphi'$ is finite. The assertion follows. \end{proof}

\subsection{The degree} 

\begin{prop}\label{prop:degfi} The degree of  $\varphi'$ is 144. Hence the degree of $\varphi$ is $144\cdot 6!$. 
\end{prop}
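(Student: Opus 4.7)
The plan is to compute $\deg(\varphi')$ by pulling back a suitable boundary divisor of $\overline{\mathcal M_{0,6}}$ and combining this with Lemma \ref{lem:gract}. Let $D\subset\overline{\mathcal M_{0,6}}$ be the boundary divisor parametrising nodal configurations with the partition $\{1,2,3\}\mid\{4,5,6\}$, namely the one induced by the two rulings of $\mathbb{Q}$. First I would show that $(\varphi')^{-1}(D)$ has $T:=\mathbb{Q}^{\vee}\subset(\mathbb{P}^3)^{\vee}$ as its unique codimension-one component: a generic tangent plane $\pi_0$ cuts $\mathbb{Q}$ in two ruling lines, and the three components of $C$ from one ruling necessarily meet $\pi_0$ on the opposite ruling line, so $\varphi'(\pi_0)\in D$; the other sources of degeneration (planes containing some $\ell_i$, or planes where two marked points coalesce) form codimension-$\geq 2$ loci in $(\mathbb{P}^3)^{\vee}$ and therefore do not contribute to the generic degree.

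Second, by Lemma \ref{lem:gract} the fibre of $\varphi'$ through a general tangent plane is exactly the $G$-orbit of that plane, and since $G$ acts faithfully on $(\mathbb{P}^3)^{\vee}$ the generic $G$-orbit on $T$ has cardinality $|G|=72$; thus $\varphi'|_T\colon T\dashrightarrow D$ is generically finite of degree $72$.

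Third --- and this is the crux --- I would compute the multiplicity $m$ with which $T$ appears in $(\varphi')^{*}(D)$, so that $\deg(\varphi')=m\cdot\deg(\varphi'|_T)=72\,m$. Fix a generic tangent plane $\pi_0$ with tangency point $(u_0,v_0)$ and a one-parameter family $\{\pi_t\}$ crossing $T$ transversally at $\pi_0$. Writing the bidegree $(1,1)$ form cutting out $\Gamma_t=\pi_t\cap\mathbb{Q}$ as $\alpha_t+\beta_tv+\gamma_tu+\delta_tuv$ and expanding in a local chart at $(u_0,v_0)$, I would track how the six marked points on $\Gamma_t$ converge to the nodal configuration $\varphi'(\pi_0)\in D$ and compare the rate at which the smoothing parameter of the node vanishes with the natural versal coordinate of $D$. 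The expected value is $m=2$, the extra factor reflecting that transverse motion away from $T$ separates the two triples of marked points from the node in a quadratic way matched to the local geometry of the Segre quadric $T$; this gives $\deg(\varphi')=72\cdot 2=144$, and then by Corollary \ref{cor:fin} one obtains $\deg(\varphi)=6!\cdot\deg(\varphi')=144\cdot 6!$.

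The main obstacle is the multiplicity computation in the third step: the local analysis at the nodal degeneration has to be carried out carefully, and one must also verify rigorously that no hidden codimension-one contribution comes from the loci excluded in the first step. An alternative would be to bypass the multiplicity calculation altogether by writing $\varphi'$ explicitly in coordinates on an affine chart of $(\mathbb{P}^3)^{\vee}$ and counting preimages of a fixed general target in $\mathcal M_{0,6}$, but the boundary-pullback route is more intrinsic and exploits the symmetry of $G$ directly.
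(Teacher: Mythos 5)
Your outline follows the paper's own proof almost step for step: both arguments localize at the family $T=\mathbb Q^{\vee}$ of tangent planes, invoke Lemma \ref{lem:gract} plus the triviality of the generic stabilizer to get the factor $72=|G|$, and then claim a local multiplicity $2$ to reach $144$. The genuine gap is that your third step --- the only nontrivial analytic content of the proposition --- is not carried out: you state that ``the expected value is $m=2$'' and gesture at a quadratic separation of the marked points, but no computation is given. The paper discharges exactly this point by a tangent-space argument: it identifies the tangent space of $\overline{\mathcal M_{0,6}}$ at the nodal pointed curve $(a\cup b,\,p_1,p_2,p_3,q_1,q_2,q_3)$ with $\bigoplus_{i=1}^{3}T_{b,q_i}$ and shows that $(d\varphi')_{\pi}$ has rank exactly $2$ (its kernel contains the direction of the pencil of planes through the line $b$, while the composition with the projection onto $T_{b,q_1}\oplus T_{b,q_2}$ is surjective), whence corank one and simple ramification, i.e.\ local degree $2$ at each of the $72$ points of the fibre. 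Some such differential or local computation is indispensable; without it your argument only yields $\deg(\varphi')=72\,m$ with $m$ undetermined.

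I would also warn you that your heuristic for $m=2$ is not self-evidently correct, so the missing computation cannot be waved away. If one takes a pencil $\pi_t$ meeting $\mathbb Q^{\vee}$ transversally, the conic $\pi_t\cap\mathbb Q$ acquires local equation $xy=ct+O(t^2)$ at the node, so the smoothing parameter of the conic is \emph{linear}, not quadratic, in $t$; and the standard transverse coordinate to the boundary divisor $D$ (a cross-ratio of two marked points from each branch, minus $1$) is likewise linear in the smoothing parameter when the marked points stay away from the node. A naive first-order analysis therefore suggests $m=1$, so the factor $2$, if present, must be located precisely in how the six marked points (not the bare conic) degenerate --- which is what a careful version of the paper's rank computation is meant to capture, and what your sketch omits. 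Finally, your second step quietly uses the ``trivial'' inclusion (orbit $\subseteq$ fibre) of Lemma \ref{lem:gract}; for the \emph{ordered} map $\varphi'$ a nontrivial $g\in G$ permutes the labelled components of $C$ and hence relabels the six marked points, so the claim that the whole $G$-orbit lies in a single fibre of $\varphi'$ (rather than of $\varphi$) also needs explicit justification before the count $72$ can be asserted.
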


\begin{proof} Let $\pi$ be general a tangent plane to $\mathbb Q$. Then the stabilizer of $\pi$ for the $G$ action does not depend on $\pi$. We claim that this stabilizer is trivial. Indeed, if $g\in G$ fixes $\pi$, then $g$ (that acts also on $(\mathbb P^3)^\vee$)  fixes the dual quadric $\mathbb Q^\vee$ pointwise and so it is the identity. 

So the set theoretic fibre of $\pi$ via $\varphi'$, that by Lemma \ref {lem:gract} is the $G$--orbit of $\pi$, has degree 72. To prove the assertion we will prove that any point in the set theoretic fibre of $\pi$ via $\varphi'$ has multiplicity 2. To show this, given the $G$--action, it suffices to show that $\varphi'$ is simply ramified at $\pi$. To see this we proceed as follows.

Let the intersection curve of $\pi$ with $\mathbb Q$ be the union of the two distinct lines $a,b$, and the intersection of $\pi$ with $C$ be the six points $p_1,p_2,p_3$ lying on $a$ and 
$q_1,q_2,q_3$ lying on $b$. Recall that $\overline {\mathcal M_{0,6}}$ is smooth. We want to identify the tangent space $\mathbb T$ to $\overline {\mathcal M_{0,6}}$ at the pointed curve $(a\cup b, p_1,p_2,p_3,q_1,q_2,q_3)$. 

First we make the following remark. Let $(\mathbb P^1, x_1,\ldots ,x_6)$ be a general point in 
$\overline {\mathcal M_{0,6}}$. Then moving this point in a neighborood of it in $\overline {\mathcal M_{0,6}}$ is the same as keeping $x_1,x_2, x_2$ fixed and moving $x_4,x_5,x_6$. This implies that the tangent space to $\overline {\mathcal M_{0,6}}$ at $(\mathbb P^1, x_1,\ldots ,x_6)$ can be identified with 
$$
\bigoplus_{i=4}^6 T_{\mathbb P^1, x_i}. 
$$
Now, let us take a 1--dimensional flat family $\mathcal C\longrightarrow \mathbb D$ of curves over a disk $\mathbb D$ such that the central fibre is the pointed curve $(a\cup b, p_1,p_2,p_3,q_1,q_2,q_3)$ and the general fibre is the curve $(\mathbb P^1, x_1,\ldots ,x_6)$. By specialization we see that we may identify $\mathbb T$ with
$$
\mathbb T= \bigoplus_{i=1}^3 T_{b, q_i}. 
$$
Now we look at the differential
$$
(d\varphi')_\pi: T_{(\mathbb P^3)^\vee, \pi} \longrightarrow \mathbb T=\bigoplus_{i=1}^3 T_{b, q_i}.
$$
We notice that $(d\varphi')_\pi$ is not injective. Indeed, the tangent vector to the deformation of $\pi$ in the pencil of planes with centre $b$, clearly maps to 0. We want to show that the rank of $(d\varphi')_\pi$ is exactly 2. For this it suffices to prove that  the composition of $(d\varphi')_\pi$ with the projection 
$$
\mathbb T=\bigoplus_{i=1}^3 T_{b, q_i}\longrightarrow \bigoplus_{i=1}^2 T_{b, q_i}
$$
is surjective. To see this, take the tangent vector to a general deformation of $\pi$ that takes $q_1$ fixed. Then the image of this vector to $\bigoplus_{i=1}^2 T_{b, q_i}$ vanishes on the first component but not on the other. Similarly, the image of the tangent vector to a general deformation of $\pi$ that takes $q_2$ fixed vanishes on the second component but not on the other. This proved the assertion. \end{proof} 

\section{The main modular map and its degree}\label{sec:mail}

\subsection{Definition} Let now $X\subset \mathbb P^4$ be a general cubic threefold. 
  If $p\in X$ is a general point, there are six distinct lines on $X_0$ passing through $p$ and 
  these six lines lie on an irreducible quadric cone. This can be easily directly checked, but it follows also by semicontinuity by the results in \S \ref {sec:phi}. Hence it follows again that there is an obvious  rational map
$$
\phi: X \dasharrow  \overline {{\mathcal M}_2}.
$$
By taking into account that the map $\phi_0$ in \S \ref {sec:phi} is dominant, we see that also $\phi$ is dominant. The main purpose of this section is to compute the degree of $\phi$.

\subsection{The degeneration}\label{ssec: deg} By moving $X$ to a Segre primal $X_0$ in a general pencil, we  get a flat family $\mathcal X\longrightarrow \mathbb D$, with $\mathbb D$ a disk, with central fibre the Segre primal $X_0$, with general fibre the general cubic 3--fold $X$ and smooth total space. For reasons that will be clear later, we make an order 2 base change in the family $\mathcal X\longrightarrow \mathbb D$, thus getting a new family $\mathcal X'\longrightarrow \mathbb D$, with central fibre the Segre primal $X_0$, with general fibre the general cubic 3--fold $X$ but total space $\mathcal X'$ singular at the $10$ points that coincide with the double points of the central fibre $X_0$, where $\mathcal X'$ has ordinary double points. 
At this points we desingularize $\mathcal X'$ by blowing it up at its 10 ordinary double points, thus getting a new flat family $\mathcal X''\longrightarrow \mathbb D$ whose general fibre is again the general cubic 3--fold $X$, whereas the central fibre $X''_0$ consists of 11 irreducible components as follows:\\
\begin{inparaenum}
\item [$\bullet$] there is an irreducible component $X'_0$ that is the minimal resolution of the singularities $X'_0\longrightarrow X_0$ of the Segre primal $X_0$ obtained by  blowing it up at the 10 double points. The double points are substituted by the exceptional divisors of the blow--up that are 10 smooth 2--dimensional quadrics;\\
\item [$\bullet$] the remaining 10 components are the exceptional divisors of the blow--up of $\mathcal X'$ at its 10 ordinary double points, i.e., they are 10 smooth quadrics of dimension 3, that are attached to $X'_0$ along the hyperplane sections consisting of the exceptional divisors of the blow--up of $X_0$ at the 10 double points. 
\end{inparaenum}

The map $\phi$ specializes to a map $\phi'': X_0''\dasharrow \overline {\mathcal M_2}$. The restriction of $\phi''$ to $X'_0$ is the composition of $\phi_0$ with the blowing--up map. So its degree is $6!$ by Proposition \ref {prop:deg}. We  have to understand
the restriction of $\phi''$ to the 10 exceptional quadric components. Eventually we will have
\begin{equation}\label{eq:deg}
\deg(\phi)=6! + 10 \cdot \delta
\end{equation}
where $\delta$ is the degree of the restriction of $\phi''$ to one of the 10 exceptional quadric components (remember that the double points of $X_0$ are an orbit by the action of $\mathfrak S_6={\rm Aut}(X_0)$, so that the restriction of $\phi''$ to any one of the 10 exceptional quadric components has the same degree).

\subsection{The restriction of the map to the exceptional components and the degree}\label{ssec:var}  We let $Q$ be one of the 10 exceptional quadric components of the central fibre of $\mathcal X''\longrightarrow \mathbb D$. As we know, $Q$ is attached to the desingularization $X_0'$ of the Segre primal, along a smooth quadric surface $Q'$, that is the exceptional divisor of $X'_0\longrightarrow X_0$ at one of the double points of $X_0$ that we denote by $p$. We will denote by $\mathbb P$ the 3--dimensional span of $Q'$. 

On the quadric $Q'$ there is a curve $C$ of type $(3,3)$ consisting of three lines of a ruling plus three lines of the other ruling. These lines are the exceptional divisors of the blow--ups at $p$ of the six planes in $X_0$  passing through $p$. 

Let now $x\in Q$ be a general point. The lines in $Q$ passing through $x$ form a quadric cone surface $Q_x$ of rank 3, with vertex at $x$, that is the intersection of $Q$ with the tangent hyperplane $T_{Q,x}$ of $Q$ at $x$. This tangent hyperplane intersects:\\
\begin{inparaenum}
\item [$\bullet$] $\mathbb P$  along a plane  $\pi_x$, that, by the generality of $x$ is a general plane of $\mathbb P$ and there is only another point $y\in Q$ such that $\pi_y=\pi_x$;\\
\item [$\bullet$] $Q'$ along a conic $\Gamma_x$, intersection of $\pi_x$ with $Q$, that by the generality assumptions is a general plane section of $Q'$;\\	
\item [$\bullet$] $C$ at six distinct points that are also the intersection of $\Gamma_x$ with $C$.
\end{inparaenum}

Through each one $z$ of the six intersection points of $C$ with $T_{Q,x}$, there are two lines: one in $Q_x$ joining $x$ with $z$, the other in $X'_0$ is described as follows. The point $z$ sits in only one $r$ of the irreducible component lines of $C$. The line $r$ is the exceptional divisor of the blow--up $\alpha'$ in $p$ of a plane $\alpha$ lying in $X_0$ and passing through $p$. Hence there is a unique line in $\alpha$ passing through $p$ whose proper transform on $\alpha'$ passes through $z$. The union of these two lines can be interpreted as a line in the central fibre of $\mathcal X''\longrightarrow \mathbb D$ passing through $x$, so that there are exactly six of these lines. In conclusion, in the above set up we have:

\begin{prop}\label{prop:conc} The restriction of $\phi''$ to $Q$ maps the general point $x\in Q$ to the genus two curve that is the double cover of the conic $\Gamma_x$ branched along the 
six intersection points of $\Gamma_x$ with $C$.
\end{prop}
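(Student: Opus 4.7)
The plan is to compute $\phi''(x)$ for a general $x\in Q$ as the flat limit of $\phi(x_t)$ along a general section $\{x_t\}_{t\in\mathbb{D}}\subset\mathcal X''$ of $\mathcal X''\to\mathbb{D}$ with $x_0=x$ and $x_t$ a general point of $X_t$ for $t\neq 0$. Since $\overline{\mathcal M_2}$ is separated, this limit is uniquely determined, and the problem reduces to identifying it. Recall that for a general $x_t\in X_t$, $\phi(x_t)$ is the double cover of the smooth conic $\Gamma_{x_t}$ cut out, on a generic plane section, from the rank $3$ polar quadric cone with vertex $x_t$ of lines having contact $\geq 3$ with $X_t$ at $x_t$, branched at the six points where the six lines of $X_t$ through $x_t$ meet $\Gamma_{x_t}$. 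My task is therefore to identify, in the central fibre, the flat limits of these six lines and of the polar tangent cone, and to read off the resulting conic together with its six marked points.

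For the six lines I would argue that they degenerate exactly to the six broken curves already constructed in the discussion preceding the proposition: each broken curve is the union of a line in $Q_x$ joining $x$ to a point $z\in C\cap T_{Q,x}$ with the proper transform on $X_0'$ of the corresponding line through the node $p$ in one of the six planes of $X_0$. The key geometric observation is that a flat family of lines in the smooth total space $\mathcal X''$, whose generic member lies on $X_t$, must break in the limit along the intersection $Q\cap X_0'=Q'$ of the two relevant components of the central fibre; moreover, the component through $x$ must lie in the tangent hyperplane $T_{Q,x}$, because the limit of the polar quadric cone still has $x$ as a vertex.

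This identifies the restriction to $Q$ of the limit polar cone with $Q_x=Q\cap T_{Q,x}$, which is automatically a rank $3$ quadric cone with vertex $x$ since $Q$ is a smooth three--dimensional quadric. Sectioning $Q_x$ by the plane $\pi_x=T_{Q,x}\cap\mathbb{P}$ (which, for generic $x$, does not contain $x$) produces the smooth conic $\Gamma_x\subset Q'$, and the six $Q$--components of the broken lines meet $\pi_x$ precisely at the six intersection points $\Gamma_x\cap C$, which are therefore the natural marked points. Continuity of the moduli assignment and separatedness of $\overline{\mathcal M_2}$ then force $\phi''(x)$ to be the genus $2$ double cover of $\Gamma_x$ branched at $\Gamma_x\cap C$, as claimed.

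The main obstacle is the rigorous limit analysis underlying the second step: one must verify, via a local computation in the smooth total space $\mathcal X''$ near $x$ and using the fact that the family $\mathcal X''\to\mathbb{D}$ is obtained from the small resolution of the ordinary double point of $\mathcal X'$ at $p$, that the flat limit of the scheme of lines through $x_t$ on $X_t$ is supported on exactly these six broken curves, each with multiplicity one, and that no spurious component (for instance, a line entirely contained in $Q$ or in the other nine exceptional quadrics) contributes to $\phi''(x)$. Once this local analysis is in place, the identification of the limit polar cone with $Q_x$, and hence the description of $\phi''(x)$ as the double cover of $\Gamma_x$ branched at $\Gamma_x\cap C$, follows directly from the geometric picture built in the discussion preceding the proposition.
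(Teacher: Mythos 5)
Your proposal follows essentially the same route as the paper: the paper offers no separate proof of this proposition but presents it as the conclusion of the preceding discussion, which constructs exactly the six broken lines (a line of $Q_x$ from $x$ to $z\in C\cap T_{Q,x}$ union the proper transform of the corresponding line through $p$ in a plane of $X_0$) and identifies the conic $\Gamma_x=\pi_x\cap Q'$ with its six marked points $\Gamma_x\cap C$. The flat-limit verification you flag as the remaining obstacle is not carried out in the paper either, so your write-up is, if anything, more explicit about what is being asserted.
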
 

As an obvious  consequence, we have:

\begin{cor}\label{cor:cons} The number $\delta$ introduced in equation \eqref {eq:deg}
equals $2\cdot \deg(\varphi)=6!\cdot 288$. 
\end{cor}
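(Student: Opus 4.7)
The strategy is to use Proposition \ref{prop:conc} to factor the restriction $\phi''|_Q$ as a composition $\varphi\circ\rho$, where $\rho: Q\dasharrow \mP^\vee$ is the rational map $x\mapsto \pi_x = T_{Q,x}\cap \mP$ (with $\mP$ the three--dimensional span of $Q'$), and $\varphi: \mP^\vee \dasharrow \overline{\sM_2}$ is the modular map of Section \ref{sec:anot} applied to the $(3,3)$--curve $C\subset Q'$. By Proposition \ref{prop:conc}, on a dense open subset of $Q$ both $\phi''|_Q$ and $\varphi\circ\rho$ send a general point $x$ to the double cover of the conic $\Gamma_x = \pi_x\cap Q'$ branched at the six points $\Gamma_x\cap C$, so the factorization $\phi''|_Q=\varphi\circ\rho$ holds. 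Consequently
\[
\delta \;=\; \deg(\rho)\cdot \deg(\varphi),
\]
and since $\deg(\varphi)=144\cdot 6!$ by Proposition \ref{prop:degfi}, the proof reduces to showing $\deg(\rho)=2$.

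For this, fix a general plane $\pi\subset \mP$: then $\rho^{-1}(\pi)$ consists of those $x\in Q$ whose tangent hyperplane $T_{Q,x}\subset \mP^4$ contains $\pi$. The hyperplanes of $\mP^4$ through $\pi$ form a pencil, i.e.\ a line $L$ in the dual projective space $(\mP^4)^\vee$. Since $Q$ is a smooth three--dimensional quadric, the Gauss map $x\mapsto T_{Q,x}$ identifies $Q$ with the dual smooth quadric three--fold $Q^\vee\subset (\mP^4)^\vee$, and under this identification $\rho^{-1}(\pi)$ corresponds to $L\cap Q^\vee$. A general line meets a smooth quadric three--fold in exactly two points, so $\deg(\rho)=2$; this matches the observation already recorded in \S\ref{ssec:var} that for general $x\in Q$ there exists precisely one other point $y\in Q$ with $\pi_y=\pi_x$.

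Combining the two degrees yields
\[
\delta \;=\; 2\cdot \deg(\varphi) \;=\; 2\cdot 144\cdot 6! \;=\; 6!\cdot 288,
\]
which is the asserted equality. The proof is brief because all of the substantive content lives elsewhere: the modular identification of $\phi''|_Q$ as a composition is Proposition \ref{prop:conc}, the computation $\deg(\varphi)=144\cdot 6!$ is Proposition \ref{prop:degfi}, and the remaining input is the elementary Gauss--map/quadric--line computation above. The only technical point requiring mild care is that both the factorization and the fibre count take place on open subsets avoiding the indeterminacy locus $Q\cap \mP = Q'$ of $\rho$ and the locus where $L$ meets $Q^\vee$ non--transversally, but these are automatically avoided by the generality hypotheses on $x$ and $\pi$ built into the previous statements.
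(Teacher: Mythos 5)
Your proof is correct and is exactly the argument the paper intends: the paper states this corollary as ``an obvious consequence'' of Proposition \ref{prop:conc} with no written proof, the degree-2 claim for $x\mapsto\pi_x$ being the observation already recorded in \S\ref{ssec:var} that there is exactly one other point $y\in Q$ with $\pi_y=\pi_x$. Your factorization $\phi''|_Q=\varphi\circ\rho$ and the Gauss-map computation of $\deg(\rho)=2$ simply make that implicit reasoning explicit.
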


From \eqref {eq:deg} we can finally conclude:

\begin{thm}\label{thm:main} One has
$$
\deg(\phi)=6! + 6!\cdot 2880=2074320.
$$
\end{thm}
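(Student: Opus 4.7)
The plan is to combine the degeneration formula \eqref{eq:deg} with the computation of the local contribution $\delta$ carried out in Corollary \ref{cor:cons}. After the substantial groundwork laid in the preceding subsections, the proof reduces essentially to a short arithmetic step, so I will not reopen geometric questions already settled.

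First I would recall, from \S\ref{ssec: deg}, the decomposition $\deg(\phi)=6!+10\delta$. This rests on specializing $\phi$ to $\phi''$ on the central fibre of the family $\mathcal{X}''\to \mathbb D$, whose $11$ components are the blow--up $X_0'$ of the Segre primal together with the $10$ smooth three--dimensional exceptional quadrics. The restriction of $\phi''$ to $X_0'$ factors as the blow--down $X_0'\to X_0$ followed by $\phi_0$, of degree $6!$ by Proposition \ref{prop:deg}; the transitivity of $\mathfrak S_6=\mathrm{Aut}(X_0)$ on the ten nodes of $X_0$ then forces each exceptional quadric to contribute the same integer $\delta$. Second, I would substitute $\delta=2\cdot \deg(\varphi)=2\cdot 144\cdot 6!=288\cdot 6!$, invoking Corollary \ref{cor:cons} and Proposition \ref{prop:degfi}; the factor $2$ encodes the $2$--to--$1$ description of the restriction $Q\dashrightarrow \overline{\mathcal M_2}$ through the tangent--hyperplane construction of Proposition \ref{prop:conc} (two distinct $x,y\in Q$ yield the same section $\pi_x=\pi_y$). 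The arithmetic then gives
$$
\deg(\phi)=6!+10\cdot 288\cdot 6!=6!\,(1+2880)=720\cdot 2881=2074320.
$$

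I do not expect any serious obstacle at this stage; the only residual point worth being careful about is the additivity of the specialization degree on the reducible central fibre, i.e.\ the legitimacy of writing $\deg(\phi)=6!+10\delta$. This is exactly why the order $2$ base change and the blow--up of the ten nodes of the total space were performed in \S\ref{ssec: deg}: they produce a semistable model with smooth total space and reduced central fibre meeting transversely along the exceptional quadric surfaces $Q'$, under which hypotheses the degree of a dominant rational map to a proper target specializes additively across the components. Once this is invoked, the theorem follows by the displayed computation.
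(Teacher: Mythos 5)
Your proposal is correct and follows essentially the same route as the paper: one substitutes $\delta=2\cdot\deg(\varphi)=288\cdot 6!$ from Corollary \ref{cor:cons} into the degeneration formula \eqref{eq:deg} and computes $6!+10\cdot 288\cdot 6!=6!\cdot(1+2880)=2074320$. Your closing remark justifying the additivity of the degree over the components of the semistable central fibre is a point the paper leaves implicit, but it is consistent with the construction in \S\ref{ssec: deg}.
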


\end{document}